%%%%%%%%%%%%%%%%%%%%%%%%%%%%%%%%%%%%%%%%%%%%%%%%%%%%%%%%%%%%%%%%%%%%%%%%%%%%%%%%
%2345678901234567890123456789012345678901234567890123456789012345678901234567890
%        1         2         3         4         5         6         7         8

\documentclass[letterpaper, 10 pt, conference]{ieeeconf}  % Comment this line out
                                                          % if you need a4paper
%\documentclass[a4paper, 10pt, conference]{ieeeconf}      % Use this line for a4
                                                          % paper

\IEEEoverridecommandlockouts                              % This command is only
                                                          % needed if you want to
                                                          % use the \thanks command
\overrideIEEEmargins
% See the \addtolength command later in the file to balance the column lengths
% on the last page of the document

\pdfobjcompresslevel=0
% The following packages can be found on http:\\www.ctan.org
\usepackage{graphics} % for pdf, bitmapped graphics files
\usepackage{amsmath,amsfonts} % assumes amsmath package installed
\usepackage{tikz}
\usepackage{siunitx}
% \usepackage[colorinlistoftodos, prependcaption,textwidth=\marginparwidth, textsize=tiny]{todonotes}
% \setuptodonotes{color=blue!29, bordercolor=blue!47}

% \newcommand{\ourtodo}[3]{\todo[color=#3!29, bordercolor=#3!47]{\emph{#1:} #2}}
% \newcommand{\Olivier}[1]{\ourtodo{Olivier}{#1}{red}}
% \newcommand{\Damien}[1]{\ourtodo{Liudi}{#1}{cyan}}
% \newcommand{\Julien}[1]{\ourtodo{Julien}{#1}{yellow}}

% macro
 
\newtheorem{proposition}{Proposition}
\newtheorem{lemma}{Lemma}

\newcommand\R{\mathbb{R}}
\newcommand\D{\mathrm{d}}
\newcommand\I{\mathcal{I}_N}
\renewcommand\P{\mathcal{P}}
\renewcommand\S{\mathfrak{S}}
\title{\LARGE \bf Optimizing microalgal productivity in raceway ponds through a controlled mixing device  }

\author
{
{
\centering Olivier Bernard$^\dagger$
\thanks{$^\dagger$Universit\'e Nice C\^ote d'Azur, Inria BIOCORE, BP93, 06902 Sophia-Antipolis Cedex, France ({\tt\small olivier.bernard@inria.fr})}, 
Liudi Lu$^\star$,
Julien Salomon$^\star$
\thanks{$^\star$INRIA  Paris,  ANGE  Project-Team,  75589  Paris  Cedex  12,  France   and Sorbonne Universit\'e, CNRS, Laboratoire Jacques-Louis Lions, 75005 Paris, France ({\tt\small liudi.lu@inria.fr}, 
{\tt\small julien.salomon@inria.fr})}
}
}

\begin{document}

\maketitle
\thispagestyle{empty}
\pagestyle{empty}

%%%%%%%%%%%%%%%%%%%%%%%%%%%%%%%%%%%%%%%%%%%%%%%%%%%%%%%%%%%%%%%%%%%%%%%%%%%%%%%%
\begin{abstract}
This paper focuses on mixing strategies to enhance the growth of microalgae in a raceway pond. The flow is assumed to be laminar and the Han model describing the dynamics of the photosystems is used as a basis to determine growth rate as a function of light history. A device controlling the mixing is assumed, which means that the order of the cells along the different layers can be rearranged at each new lap according to a permutation matrix $P$. The order of cell depth hence the light perceived is consequently modified on a cyclical basis. The dynamics of the photosystems are computed over $K$ laps of the raceway with permutation $P$. It is proven that if a periodic regime is reached, it will be periodic immediately after the first lap, which enables to reduce significantly the computational cost when testing all the permutations. In view of optimizing the production, a functional corresponding to the average growth rate along depth and for one lap is introduced. A suboptimal but explicit solution is proposed and compared numerically to the optimal permutation and other strategies for different cases. Finally, the expected gains in growth rate are discussed. 
\end{abstract}

%%%%%%%%%%%%%%%%%%%%%%%%%%%%%%%%%%%%%%%%%%%%%%%%%%%%%%%%%%%%%%%%%%%%%%%%%%%%%%%%
\section{Introduction}

Microalgae have shown a growing interest for producing food, feed, green chemistry or even biofuels \cite{Wijffels2010}. The most widespread way of cultivating them is the so-called raceway pond: an annular basin agitated by a paddle wheel. Hydrodynamical studies have shown that the paddle wheel played a key role \cite{Bernard2013,Demory2018} by modifying the elevation of the cells, and thus giving successively access to light to all the population. In this paper, we focus on such possible effects to determine what should be the optimal design of a paddle wheel for rearranging the trajectories so that the photosystems dynamics eventually lead to an optimal production.

The outline of the paper is as follows: in Section 2, we present the raceway model we use, namely, the biological model and the mixing device. This allows us to get explicit formula to determine the growth of microalgae during the production process. In Section 3, we introduce the optimization problem by using some properties of the model along with an approximate optimization problem whose solution is explicit. Some numerical results are presented in Section 4. We conclude in Section 5 with some comments and perspectives.

In what follows, $\mathbb{N}$ denotes the set of non-negative integers and $\I$ denotes the identity matrix of size $N\in\mathbb{N}$. Given a matrix $M$, we denote by $\text{ker}(M)$ its kernel and by $M_{i,j}$ its coefficient $(i , j)$. In the same way, $W_n$ denotes the coefficient $n$ of a vector $W$. 

%%%%%%%%%%%%%%%%%%%%%%%%%%%%%%%%%%%%%%%%%%%%%%%%%%%%%%%%%%%%%%%%%%%%%%%%%%%%%%%%
\section{Raceway modeling}

\subsection{Han model}
We consider the Han model~\cite{Han2001} which explains the dynamics of the reaction centers within the photosystems. These subunits of the photosynthetic process harvest photons and transfer their energy to the cell to fix CO$_2$. In this compartmental model, the reaction centers can be described by three different states: open and ready to harvest a photon ($A$), closed while processing the absorbed photon energy ($B$), or inhibited if several photons have been absorbed simultaneously ($C$). The relation of these three states are presented in Fig~\ref{fighan}.
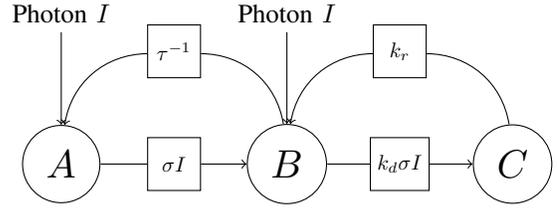
\begin{figure}[thpb]
\begin{center}
\usetikzlibrary{shapes.geometric}
\tikzset{block/.style = {draw,rectangle,minimum height = 2.5em,minimum width = 2.5em}}
\begin{tikzpicture}
\node at (0,0)[circle,draw,scale=1.5] (A){$A$};
\node at (3,0)[circle,draw,scale=1.5] (B){$B$};
\node at (6,0)[circle,draw,scale=1.5] (C){$C$};
\node at (1.5,0) [block,draw,scale=0.8](sigI){$\sigma I$};
\node at (4.5,0) [block,draw,scale=0.8](kdsigI){$k_d\sigma I$};
\node at (1.5,1.5) [block,draw,scale=0.8](tau1){$\tau^{-1}$};
\node at (4.5,1.5) [block,draw,scale=0.8](kr){$k_r$};
\node at (0,2)(Pho1){Photon $I$};
\node at (3,2)(Pho2){Photon $I$};
\draw (A)--(sigI)
[->](sigI)--(B);
\draw (B)--(kdsigI)
[->](kdsigI)--(C);
\draw (C) edge[bend right=40] (kr)
(B) edge[<-,bend left=40] (kr);
\draw (B) edge[bend right=40] (tau1)
(A) edge[<-,bend left=40] (tau1);
\draw [->](Pho1)--(A);
\draw [->](Pho2)--(B);
\end{tikzpicture}
\end{center}
\caption{Scheme of the Han model, representing the probability to go from one state to another, as a function of the photon flux density.} 
\label{fighan}
\end{figure}
Their evolution satisfy the following dynamical system
\begin{equation*}
\left\{
\begin{array}{lr}
\dot{A} = -\sigma I A + \frac B{\tau},\\
\dot{B} =  \sigma I A - \frac B{\tau} + k_rC - k_d\sigma I B,\\
\dot{C} = -k_rC + k_d \sigma I B.
\end{array}
\right.
\end{equation*}
Here $A, B$ and $C$ are the relative frequencies of the three possible states with 
\begin{equation}\label{abc}
A+B+C=1,
\end{equation}
and $I$ is a continuous time-varying signal representing the photon flux density. Besides, $\sigma$ stands for the specific photon absorption, $\tau$ is the turnover rate, $k_r$ represents the photosystem repair rate and $k_d$ is the damage rate. Following~\cite{Lamare2018} and using~\eqref{abc}, we reduce this system to one single evolution equation:
\begin{equation}\label{evolC}
\dot{C} = -\alpha(I) C + \beta(I),
\end{equation}
where
\begin{equation*}
\begin{split}
\alpha(I) &= k_d\tau \frac{(\sigma I)^2}{\tau \sigma I+1} + k_r,\\
\beta(I) &=  k_d\tau \frac{(\sigma I)^2}{\tau \sigma I+1}.
\end{split}
\end{equation*}
The net specific growth rate is obtained by balancing photosynthesis and respiration, which gives
\begin{equation}\label{mu}
\mu(C,I) = -\gamma(I)C + \zeta(I),
\end{equation}
where%\Julien{Il faudrait donner le sens de $\gamma$ et $\zeta$ dans une phrase après.}
\begin{equation*}
\begin{split}
\gamma(I)  &= \frac{k\sigma I}{\tau \sigma I+1},\\
\zeta(I) &= \frac{k\sigma I}{\tau \sigma I+1}-R.
\end{split}
\end{equation*}
Here $k$ is a factor  linking the photosynthetic activity and the growth rate. The term $R$ represents the respiration rate. 

To obtain the photon flux density $I$ at depth $z$, we assume that growth takes place at a much slower time scale. The biomass variations are thus  negligible over one lap of the raceway. As a consequence the turbidity is supposed to be constant at the considered time scale. In this framework, the  Beer-Lambert law describes the light attenuation  as a function of depth by:
\begin{equation}\label{Beer}
I(z) = I_s\exp(\varepsilon z),
\end{equation}
where $I_s$ is the light intensity at the free surface, $\varepsilon$ is the light extinction coefficient and $z$ is the depth of the algae. We suppose that the system is perfectly mixed so that the concentration of the biomass is homogeneous, meaning that $\varepsilon$ is constant. The average net  specific growth rate over the domain is defined by
\begin{equation*}
\bar{\mu} := \frac 1T\int_0^T\frac 1{h}\int_{-h}^{0} \mu\big(C(t,z), I(z)\big) \D z \D t,
\end{equation*}
where $h$ is the depth of the raceway pond and $T$ is the average duration of one lap of the raceway pond. 

In order to tackle numerically this problem, we introduce a vertical discretization of the fluid. %Let $N$ be the number of layers
%, and consider 
Consider $N$ layers uniformly distributed on a vertical grid, meaning that the  
layer $n$ is located at depth $z_n$ defined by:
\begin{equation}\label{zn}
z_n = -\frac {n-\frac12}{N} h, \quad n=1,\cdots,N.
\end{equation}
%For simplicity of notation, we write $I_n$ instead of $I(z_n)$ hereafter. 
Let $C_n(t)$ and $I_n$ the corresponding  photo-inhibition state and the light intensity, respectively. In this semi-discrete setting, the average net specific growth rate in the raceway pond can be defined by%\Julien{Si on veut une moyenne, il faut diviser par $N$ et pas $N-1$, i.e. $z_n$ est situé au milieu d'une couche. Il faut dans ce cas changer la formule des $z_n$ par $z_n=(n-1/2)\dot h/N$, $n=1,\ldots,N$. }
\begin{equation}\label{muN}
\bar \mu_{N} := \frac 1T \int_0^T \frac 1{N}\sum_{n=1}^{N}\mu( C_n(t),I_n ) \D t.
\end{equation}

\subsection{Mixing device modeling}
We denote by $\P$ the set of permutation matrices of size $N\times N$ and by $\S_N$ the associated set of permutations of $N$ elements.  The  mixing device $(P)$ is described by $P\in\P$ as follows. Denote by $\sigma\in\S_N$ the permutation corresponding to $P$. At each new lap, the algae in the layer $n$ are entirely transferred into the layer $\sigma(n)$ when passing through the mixing device.
In this way, we assume the rearrangement to be perfect.
% $P$, then $P\in \P$. 
%Let us set when $t=0$, the algae has just passed the device $P$ to start its journey, it finishes one lap of the raceway right in front of the device $P$ when $t=T$, and the process continues in this way. Figure~\ref{figP} illustrates an example for $N=4$. In this example, $P$ is the permutation matrix associated with $\sigma = (1234)$.
This model is depicted schematically on an example %in figure 2.
%An illustrative example is given 
in Figure~\ref{figP}.
\usetikzlibrary{decorations.pathreplacing}
\begin{figure}[hptb]
\begin{center}
\begin{tikzpicture}
\node at (0,-0.5)[anchor=north] {0};
\node at (2,-0.5)[anchor=north] {$T$};
\node at (3,-0.5)[anchor=north] {0};
\node at (5,-0.5)[anchor=north] {$T$};
\draw [dashed] (0,-0.5) -- (2,-0.5);
\draw [dashed] (3,-0.5) -- (5,-0.5);
\draw [dashed] (0,0.5) -- (2,0.5);
\draw [dashed] (3,0.5) -- (5,0.5);
\draw [dashed] (0,1.5) -- (2,1.5);
\draw [dashed] (3,1.5) -- (5,1.5);
\draw [dashed] (0,2.5) -- (2,2.5);
\draw [dashed] (3,2.5) -- (5,2.5);
\draw [dashed] (0,3.5) -- (2,3.5);
\draw [dashed] (3,3.5) -- (5,3.5);
\draw [decorate,decoration={brace,amplitude=10pt},thick](-0.25,-0.5) -- (-0.25,0.5);
\draw [decorate,decoration={brace,amplitude=10pt},thick](-0.25,0.5) -- (-0.25,1.5);
\draw [decorate,decoration={brace,amplitude=10pt},thick](-0.25,1.5) -- (-0.25,2.5);
\draw [decorate,decoration={brace,amplitude=10pt},thick](-0.25,2.5) -- (-0.25,3.5);
\node at (-0.5,0)[anchor=east]{Layer four};
\node at (-0.5,1)[anchor=east]{Layer three};
\node at (-0.5,2)[anchor=east]{Layer two};
\node at (-0.5,3)[anchor=east]{Layer one};
\draw [thick](0,0) -- (2,0);
\draw [thick](3,0) -- (5,0);
\draw [thick](0,1) -- (2,1);
\draw [thick](3,1) -- (5,1);
\draw [thick](0,2) -- (2,2);
\draw [thick](3,2) -- (5,2);
\draw [thick](0,3) -- (2,3);
\draw [thick](3,3) -- (5,3);
\draw [dotted,thick,->](2,0) -- (2.9,3);
\draw [dotted,thick,->](2,1) -- (2.9,0);
\draw [dotted,thick,->](2,2) -- (2.9,1);
\draw [dotted,thick,->](2,3) -- (2.9,2);
\node at (2.5,4) (P){$P$};
\draw [->,thick](P)-- (2.5,3);
\draw [->,thick](5.5,-0.6) -- (5.5,4.5);
\node at (5.9,4.5) {$z$};
\node at (5.9,3.5) {0};
\node at (5.9,-0.5){$-h$};
\node at (5.9,3) {$z_1=z_{\sigma(4)}$};
\node at (5.9,2) {$z_2=z_{\sigma(1)}$};
\node at (5.9,1) {$z_3=z_{\sigma(2)}$};
\node at (5.9,0) {$z_4=z_{\sigma(3)}$};
\node at (5.45,3.5){-};
\node at (5.45,-0.5){-};
\end{tikzpicture}
\end{center}
\caption{Example of mixing device ($P$). Here, $N=4$ and $P$ corresponds to the cyclic permutation $\sigma= (1 \ 2 \ 3\ 4)$.}
\label{figP}
\end{figure}
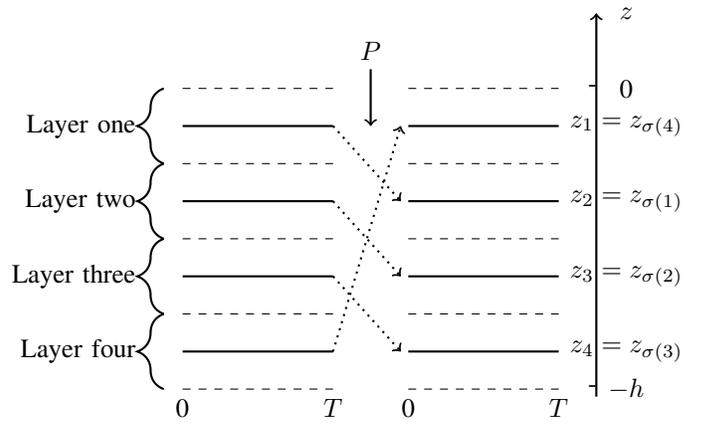
 %The interest  can mix up the algae at different level to increase the production through balancing their light intensity absormblaibstion.
The interest of such a device is to mix the algae to better balance their exposure to light and increase the production. Note that in actual raceway ponds, this device is generally a paddle wheel (see for example~\cite{Demory2018}).

\subsection{Explicit computation of the growth rate}
%The light intensity associated with each layer is denoted by $(I_n)_{n=1}^N$, where $I_1 < , \cdots , <I_N$. In particular,
%Recall that $I_1$ denotes the light intensity at the free surface and $I_N$ denotes the light intensity at the flat bottom. 
Since $(I_n)_{n=1}^N$ are constants with respect to time, for a given initial vector of states $(C_n(0))_{n=1}^N$, %the photo-inhibition state $(C_n(t))_{n=1}^N$ 
the solution of~\eqref{evolC} is given by
\begin{equation}\label{ct}
C(t) = D(t)C(0) + V(t), \quad t\in[0,T],
\end{equation}
where $D(t)$ is a diagonal matrix with $D_{nn}(t) = e^{-\alpha(I_n)t}$ and $V(t)$ is a vector with $V_n(t)=\frac{\beta(I_n)}{\alpha(I_n)}(1 - e^{-\alpha(I_n)t})$. It follows that~\eqref{muN} can also be computed explicitly, which gives
\begin{equation}\label{muN1}
\bar \mu_N=\frac 1{N}\frac1T \Big(\langle \Gamma, C(0) \rangle + \langle \mathbf 1,Z\rangle\Big),
\end{equation}
where $\mathbf 1$ is a vector of size $N$ whose coefficients equal 1, and $\Gamma, Z$ are two vectors with $\Gamma_n=\frac{\gamma(I_n)}{\alpha(I_n)}(e^{-\alpha(I_n)T}-1)$ and $Z_n= \frac{\gamma(I_n)}{\alpha(I_n)}\frac{\beta(I_n)}{\alpha(I_n)}(1-e^{-\alpha(I_n)T}) - \frac{\gamma(I_n)\beta(I_n)}{\alpha(I_n)}T+ \zeta(I_n) T$. The details of the computation giving rise to~\eqref{ct} and~\eqref{muN1} are given in Annex. For simplicity of notations, we write hereafter $D, V$ instead of $D(T),V(T)$.

\subsection{Periodic regime}
In this section, we study the evolution over multiple laps. Denote by $C^k(0)$ the photo-inhibition state of the algae which has just passed the mixing device $P$ after $k$ laps. The initial state of the system $C^0(0):=C(0)$ is assumed to be known. According to~\eqref{ct} and by definition of $P$, we have 
\begin{equation}
C^{k+1}(0) = P( DC^k(0) + V ).
\end{equation}
Before studying the sequence $\left(C^k(0)\right)_{k\in\mathbb{N}}$, let us give a technical result.
\begin{lemma}\label{inversible}
Given $k\in \mathbb{N}$, the matrix $\I - (PD)^{k}$ is invertible.
\end{lemma}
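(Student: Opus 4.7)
The plan is to show that $PD$ is a strict contraction in the operator $2$-norm; this forces the spectrum of $(PD)^k$ to lie strictly inside the unit disk, so $1$ cannot be an eigenvalue of $(PD)^k$, and $\I-(PD)^k$ is therefore invertible.

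First I would examine the diagonal matrix $D$. Since $\alpha(I_n)=k_d\tau\frac{(\sigma I_n)^2}{\tau\sigma I_n+1}+k_r\geq k_r>0$ and $T>0$, each entry $D_{nn}=e^{-\alpha(I_n)T}$ lies in the open interval $(0,1)$, and hence $\|D\|_2=\max_n e^{-\alpha(I_n)T}<1$. Next, $P$ is a permutation matrix and thus orthogonal, so $\|P\|_2=1$. Submultiplicativity of the operator norm then yields $\|PD\|_2\leq\|P\|_2\,\|D\|_2<1$, and raising to the $k$-th power gives $\|(PD)^k\|_2\leq\|PD\|_2^k<1$ for every $k\geq 1$.

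From here the conclusion follows by either of two equivalent routes. Spectrally, any eigenvalue $\lambda$ of $(PD)^k$ satisfies $|\lambda|\leq\|(PD)^k\|_2<1$, so $1$ does not belong to the spectrum of $(PD)^k$ and $\I-(PD)^k$ is invertible. Analytically, the Neumann series $(\I-(PD)^k)^{-1}=\sum_{j\geq 0}(PD)^{jk}$ converges absolutely in operator norm and exhibits the inverse explicitly, which also turns out to be convenient for the periodic-regime analysis that follows.

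There is essentially no technical obstacle beyond bookkeeping: the argument relies only on the standard Neumann/spectral-radius lemma together with the fact that $P$ is orthogonal. The one substantive ingredient is the strict bound $\|D\|_2<1$, which in turn rests on the physical assumption $k_r>0$ built into the Han model; if $k_r$ were allowed to vanish and some layer were in complete darkness ($I_n=0$), one would have $D_{nn}=1$ and the lemma could fail. Note also that the statement is only meaningful for $k\geq 1$, since $k=0$ would give $\I-\I=0$; this is the range in which the lemma is actually applied when studying the sequence $(C^k(0))_{k\in\mathbb{N}}$.
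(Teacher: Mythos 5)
Your proof is correct, but it takes a genuinely different route from the paper. The paper argues coordinate-wise: assuming $X=PDX$ for some $X\neq 0$, it iterates the relation $X_n=d_{\sigma(n)}X_{\sigma(n)}$ up to the order $L$ of the permutation $\sigma$, obtaining $X_n=d_{\sigma^L(n)}\cdots d_{\sigma(n)}X_n$ with a product of factors strictly between $0$ and $1$, which forces $X_n=0$ and yields a contradiction; the case of $(PD)^k$ is handled the same way. You instead bound $\|PD\|_2\leq\|P\|_2\|D\|_2<1$ using orthogonality of $P$ and the strict bound $\max_n e^{-\alpha(I_n)T}<1$, and conclude via the spectral radius or the Neumann series. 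Your argument is shorter, avoids the explicit bookkeeping with the order of $\sigma$, and has the added benefit of justifying the convergence of the expansion $(\I-PD)^{-1}=\sum_{m\geq 0}(PD)^m$ that the paper invokes without comment in Section III.B when defining $J^{\text{approx}}$; the paper's argument is more elementary and self-contained, using nothing beyond the permutation structure. Your two side remarks are both pertinent: the strict positivity $\alpha(I_n)\geq k_r>0$ is exactly what both proofs need (and is also implicit in the paper's claim $0<d_n<1$), and the restriction to $k\geq 1$ is a genuine caveat, since the paper's convention $0\in\mathbb{N}$ makes the statement false for $k=0$ as literally written, although the lemma is only ever applied with $k=K\geq 1$.
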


\begin{proof}
Assume $\I - PD$ is not invertible, then there exists a non-null vector $X\in \text{ker}(\I - PD)$, which means $X = PDX$. Let us denote $d_n = D_{nn}$. The coefficients of $X$ satisfy $(DX)_n = d_nX_n$ and $X_n = (PDX)_n = d_{\sigma(n)}X_{\sigma(n)}$ for $n=1,\ldots,N$. In the same way, we have $X_n = \big((PD)^kX\big)_n = d_{\sigma^k(n)} \cdots d_{\sigma(n)}X_{\sigma^k(n)}$ for $n=1,\ldots,N$. %On the other hand, since the evolution is  $KT$-periodic, we have
Denoting by $L$ the order of $\sigma$, we have
\begin{equation*}
\begin{split}
X_n &= \big((PD)^LX\big)_n \\
&= d_{\sigma^L(n)} \cdots d_{\sigma(n)}X_{\sigma^L(n)}\\
&= d_{\sigma^L(n)} \cdots d_{\sigma(n)}X_n.
\end{split}
\end{equation*}
Since, $0<d_n<1$ for $n=1,\ldots,N$, then $0<d_{\sigma^L(n)} \cdots d_{\sigma(n)}<1$. This implies that $X_n = 0$, %, or in other words, $X = \textbf{0}$. 
which contradicts our assumption. Therefore, $\I - PD$ is invertible. That $\I - (PD)^{k}$ is invertible can be proved in much the same way.
\end{proof}

Assume now that the state $C$ is $KT$-periodic in the sense that after $K$ times of passing the device ($P$), i.e. $C^K(0)=C(0)$. %We start by 
%A first result about the initial state i
A crucial property of $\left(C^k(0)\right)_{k\in\mathbb{N}}$ is given in the next proposition.
\begin{proposition}\label{C0cst}
For all $k\in \mathbb{N}$
\begin{equation}\label{c0}
C^k(0)=(\I - PD)^{-1}PV.
\end{equation}
As a consequence, the sequence $\left(C^k(0)\right)_{k\in\mathbb{N}}$ is constant. 
\end{proposition}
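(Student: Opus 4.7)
The plan is to exploit the affine recurrence $C^{k+1}(0)=PDC^k(0)+PV$, iterate it $K$ times, and then use the $KT$-periodicity together with Lemma~\ref{inversible} to extract the closed-form expression.

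First, I would unroll the recursion to obtain
\begin{equation*}
C^K(0)=(PD)^K C^0(0)+\Big(\sum_{j=0}^{K-1}(PD)^j\Big)PV.
\end{equation*}
Invoking the periodicity assumption $C^K(0)=C^0(0)$, this rearranges to
\begin{equation*}
\big(\I-(PD)^K\big)C^0(0)=\Big(\sum_{j=0}^{K-1}(PD)^j\Big)PV.
\end{equation*}
The key algebraic identity is the factorization $\sum_{j=0}^{K-1}(PD)^j=\big(\I-(PD)^K\big)(\I-PD)^{-1}$, which holds because $\I-PD$ is invertible by Lemma~\ref{inversible} (the case $k=1$), and is verified by multiplying out $(\I-PD)\sum_{j=0}^{K-1}(PD)^j=\I-(PD)^K$. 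Substituting and then cancelling the factor $\I-(PD)^K$, which is invertible by Lemma~\ref{inversible} applied with $k=K$, yields precisely
\begin{equation*}
C^0(0)=(\I-PD)^{-1}PV.
\end{equation*}

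It then remains to show that once $C^0(0)$ takes this value, the whole sequence stays constant. I would check this directly: plugging into the recurrence,
\begin{equation*}
C^1(0)=PD(\I-PD)^{-1}PV+PV=\big[PD+(\I-PD)\big](\I-PD)^{-1}PV=(\I-PD)^{-1}PV,
\end{equation*}
so $C^1(0)=C^0(0)$, and a trivial induction extends this to every $k\in\mathbb N$, giving~\eqref{c0}.

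The only step with any real content is the cancellation argument: one has to justify dividing by $\I-(PD)^K$, which is exactly the role played by Lemma~\ref{inversible}. Beyond that, the proof is a straightforward matrix computation, and the main thing to be careful about is not to commute $P$ and $D$ (they generally do not commute), so the order in the geometric-series factorization must be respected.
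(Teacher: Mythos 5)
Your proof is correct, and it reaches the conclusion by a genuinely different (though closely related) route. The paper first introduces the fixed point $\bar C$ of the affine map, i.e.\ the unique solution of $\bar C = P(D\bar C + V)$ guaranteed by Lemma~\ref{inversible}, and then studies the error $e^k := C^k(0)-\bar C$, which satisfies the purely linear recurrence $e^{k+1}=(PD)e^k$; periodicity gives $e^0=(PD)^Ke^0$, and the invertibility of $\I-(PD)^K$ forces $e^0=0$, hence $e^k=0$ for all $k$. You instead unroll the affine recurrence explicitly, which obliges you to introduce and factor the geometric sum $\sum_{j=0}^{K-1}(PD)^j=\bigl(\I-(PD)^K\bigr)(\I-PD)^{-1}$ before cancelling $\I-(PD)^K$. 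Both arguments invoke Lemma~\ref{inversible} at exactly the same two points (for $\I-PD$ and for $\I-(PD)^K$), so the mathematical content is the same; the trade-off is that the fixed-point formulation dispenses with the geometric-series identity and makes the constancy of the sequence automatic (since $e^k=(PD)^ke^0=0$), whereas your version needs the separate verification that $C^1(0)=C^0(0)$, but in exchange it produces the closed form $(\I-PD)^{-1}PV$ by direct computation rather than by first positing the fixed-point equation. Your cautionary remarks about non-commutativity of $P$ and $D$ are well taken, and your factorization is valid in either order since $(\I-PD)^{-1}$ commutes with every polynomial in $PD$.
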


\begin{proof}

Thanks to Lemma~\ref{inversible}, there exists a 
unique $\bar{C}$ satisfying 
\begin{equation*}%\label{equilibrium}
\bar{C} = P (D\bar{C} + V).
\end{equation*}
%, Equation~\eqref{equilibrium} has a unique solution. 
Define $e^k := C^k(0) - \bar{C}$,
%Note then that the sequence $(e^k)_{k=0}^K$ %defined by
%\begin{equation*}
%\end{equation*}
%$(e^k)_{k=0}^K$ is a geometric sequence since
%\begin{equation*}
so that $e^{k+1} = (PD) e^k$.
%\end{equation*}
%On the other hand, 
Since $C$ is assumed to be $KT$-periodic, we have
\begin{equation*}
e^0 = e^{K} = (PD)^{K} e^0.
\end{equation*}
According to Lemma~\ref{inversible}, $\I - (PD)^{K}$ is invertible, meaning that $e^0=0$. It follows that %$\forall k\in [\![0,K-1]\!]$,*
$e^k = 0$, for $k\in\mathbb{N}$.
%Finally, we have $\forall k\in [\![0,K-1]\!]$, $C^k(0) = \bar{C} = (\I - PD)^{-1}PV$.
The result follows.
\end{proof}
A natural choice for $K$ would be the order of the permutation associated with $P$. Indeed, $K$ is in this case  the minimal number of laps required to recover the initial ordering of the layers. The previous result shows that every $KT-$periodic evolution will actually be $T-$periodic. This will help us in simplifying the formulation of the optimization problem considered in the next section. In addition, the computations to solve the optimization problem will be reduced, since the CPU time required to assess the quality of a permutation will not depend on its order.

%%%%%%%%%%%%%%%%%%%%%%%%%%%%%%%%%%%%%%%%%%%%%%%%%%%%%%%%%%%%%%%%%%%%%%%%%%%%%%%%
\section{Optimization problem}

\subsection{Presentation of the optimization problem}
Recall that the light intensity %for each layer $(I_n)_{n=0}^N$ 
is assumed to be constant with respect to time. %and does not change from on lap to the next.  
As a consequence, $\Gamma$ and $Z$ are also constant. With the help of~\eqref{muN1}, the average net specific growth rate for $K$ laps of the raceway pond is then defined by%\Julien{Je mettrais là aussi plutôt $N$ que $N-1$}
\begin{equation*} 
\bar \mu^K_N := \frac 1K \sum_{k=0}^{K-1} \frac 1{N}\frac1T \Big(\langle \Gamma, C^k(0) \rangle + \langle \mathbf 1,Z\rangle\Big).
\end{equation*}
We assume the system to be $KT$-periodic. From Proposition~\ref{C0cst}, we obtain that $\bar \mu^K_N=\bar \mu_N$, meaning that we only need to consider the evolution over one lap of raceway. %On the other hand, our goal is to optimize \eqref{muN1} with respect to the permutation matrix $P$, 
Replacing now $C(0)$ in~\eqref{muN1} by~\eqref{c0}%C0cst}
, we obtain
\begin{equation*}
\bar \mu_N=\frac 1{N}\frac1T \Big(\langle \Gamma, (\I - PD)^{-1}PV \rangle + \langle \mathbf 1,Z\rangle\Big).
\end{equation*}
Since $N, T$ and $Z$ are independent of $P$, we need to focus on the functional 
%\Julien{on ne parle pas de "cost functional" lorsqu'on maximise !} 
defined by
\begin{equation}\label{Jp}
J(P) =\langle \Gamma, (\I - PD)^{-1}PV\rangle.
\end{equation} 
The optimization problem then reads:

\textit{Find a permutation matrix $P_{\max}$ solving the maximization problem:}
\begin{equation}\label{optproblem}
\max_{P\in \P} J(P).
\end{equation}

\subsection{Approximation of the optimization problem}
For realistic cases, e.g., large values of $N$, Problem~\eqref{optproblem} cannot be tackled in practice. %Here, we propose an approximation so that the optimal solution can be reasonably approached,
To overcome this difficulty, we now propose an approximation of $J$ whose maximum can be computed explicitly. For this purpose, we consider the following expansion of~\eqref{Jp}:
\begin{equation*}
\begin{split}
\langle \Gamma, (\I - PD)^{-1}PV\rangle = &\sum_{m=0}^{+\infty}\langle \Gamma, (PD)^{m}PV\rangle\\
= &\langle \Gamma, PV\rangle + \sum_{m=1}^{+\infty}\langle \Gamma, (PD)^{m}PV\rangle.
\end{split}
\end{equation*}
%With the increase of $m$, the first term turns out to dominate and provide a simple approximation of \eqref{Jp}, thus 
We then consider as an approximation of \eqref{Jp} the first term of this series, namely
\begin{equation}\label{Jpapprox}
J^{\text{approx}}(P)=\langle \Gamma, PV\rangle.
\end{equation}
%Recall that $\Gamma, V$ are two explicit vectors. 
%According to its definition, $V_n$ is a monotonic vector for all $T>0$. However, the monotonicity of $\Gamma$ depends on the value of $T$. More precisely, with the increase of $T$, the monotonicity of $\Gamma$ will no longer be guaranteed. 

Before detailing the solution of $\max_{P\in\P}J^{\text{approx}}(P)$, let us state a preliminary result.

\begin{lemma}
Let  $u,v \in \R^N$, % Without loss of generality, we can
 with $u_1\leq\cdots\leq u_N$, $\sigma^{\star} \in \S_N$ such that $v_{\sigma^{\star}(1)} \leq \cdots \leq v_{\sigma^{\star}(N)}$ and $P^{\star}\in\P$ the corresponding matrix. Then
\begin{equation*}
P^{\star} \in  {\rm argmax}_{P\in \P} \langle u, Pv \rangle.
\end{equation*}
\end{lemma}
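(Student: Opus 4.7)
The statement is the classical rearrangement inequality, and the plan is to establish it by an exchange (pairwise swap) argument. The idea is to start from an arbitrary permutation matrix $P \in \mathcal{P}$ and show that we can transform it into $P^\star$ by a finite sequence of transpositions that never decrease the objective $\langle u, Pv \rangle$.

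First I would fix $P$ and suppose there exist indices $i < j$ such that $(Pv)_i > (Pv)_j$, i.e. the entries of $Pv$ are out of order at positions $i,j$ (while the entries of $u$ are, by hypothesis, in order). Let $P'$ be obtained from $P$ by swapping its $i$-th and $j$-th rows (equivalently, by composing with the transposition $(i\ j)$), so that $(P'v)_i = (Pv)_j$ and $(P'v)_j = (Pv)_i$ and all other entries agree. A direct computation gives
\begin{equation*}
\langle u, P'v\rangle - \langle u, Pv\rangle = (u_j - u_i)\bigl((Pv)_i - (Pv)_j\bigr) \geq 0,
\end{equation*}
since $u_i \leq u_j$ and $(Pv)_i > (Pv)_j$ by assumption. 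Hence swapping any out-of-order pair does not decrease the objective.

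Next I would iterate this swapping procedure (a bubble-sort argument): as long as $Pv$ is not sorted in nondecreasing order, some out-of-order pair $(i,j)$ exists, and swapping it strictly reduces the number of inversions of $Pv$ while not decreasing $\langle u, Pv\rangle$. Since the number of inversions is a nonnegative integer that strictly decreases at each step, the process terminates after finitely many swaps at a permutation matrix $\tilde P$ such that $\tilde P v$ is sorted nondecreasingly. By definition of $\sigma^\star$ and $P^\star$, the vector $P^\star v$ is precisely the nondecreasing rearrangement of $v$, so $\tilde P v = P^\star v$, and consequently
\begin{equation*}
\langle u, Pv\rangle \leq \langle u, \tilde P v\rangle = \langle u, P^\star v\rangle.
\end{equation*}
Since $P \in \mathcal{P}$ was arbitrary, this shows $P^\star \in \mathrm{argmax}_{P\in\mathcal{P}} \langle u, Pv\rangle$.

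I do not expect any real obstacle here: the argument is elementary and the only subtle point is to be careful about the case of ties (repeated values in $v$), where $P^\star$ is not unique but the exchange argument still produces a vector equal to the sorted version of $v$, giving the same inner product. The termination of the swap procedure is guaranteed by the strict decrease of the inversion count.
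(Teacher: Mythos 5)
Your proof is correct, and it rests on exactly the same key inequality as the paper's, namely that swapping an out-of-order pair changes the objective by $(u_j-u_i)\bigl((Pv)_i-(Pv)_j\bigr)\geq 0$; but the overall organization is genuinely different. The paper argues in the opposite direction: it takes a maximizer $P^\star$ of $\langle u, Pv\rangle$, shows by the swap inequality that the optimal sum must contain the term $u_N\tilde v_N$ (pairing the largest entries), and then concludes by induction on $N$. You instead start from an \emph{arbitrary} $P$ and bubble-sort $Pv$ into nondecreasing order through a finite chain of non-decreasing swaps, landing on $P^\star v$ and directly obtaining $\langle u,Pv\rangle\leq\langle u,P^\star v\rangle$. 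Your route buys two things. First, it proves the lemma as literally stated (that the \emph{given} sorting permutation is a maximizer), whereas the paper's contradiction step is slightly delicate: since the swap inequality is non-strict, one cannot conclude that \emph{every} maximizer contains $u_N\tilde v_N$, only that some maximizer does, so the paper's argument really establishes that the sorted pairing attains the maximum value --- which is what is needed, but the writeup glosses over this. Second, your explicit treatment of ties and of termination via the inversion count makes the induction/termination step airtight, where the paper simply states ``the result follows by induction.'' The paper's version is shorter and is the more standard textbook presentation of the rearrangement inequality; yours is more elementary and self-contained. Either is acceptable here.
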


\begin{proof}
%The proof is by induction on $m$.
Denote by $P^\star$ a solution of ${\max}_{P\in \P} \langle u, Pv \rangle $ and by $\sigma^\star$ the corresponding element in $\S_N$. Let $\tilde v := P^\star v$. 
Assume that the sum $\langle u, P^\star v \rangle=\sum_{n=1}^N u_nv_n =:S_N$ does not contain the term $u_N \tilde v_N$. There exists $i,j <N$ such that $S_N$ contains $u_N \tilde v_j + u_i \tilde v_N$. However
\begin{equation}
u_N \tilde v_j + u_i \tilde v_N \leq u_N \tilde v_N + u_i \tilde v_j, 
\end{equation}
so that $P^\star$ is not optimal. Hence a contradiction. As a consequence, $S_N$ contains $u_N \tilde v_N$.
%and we continue with the sequences $(u_n)_{n=1}^{N-1}$ and $(\tilde v_n)_{n=1}^{N-1}$. 
%We now proceed by induction. Assume that $u_N \tilde v_N, \cdots, u_{N-n+1} \tilde v_{N-n+1}$ are contained in the maximum, for $n+1$ and $u_{N-n} \tilde v_{N-n}$ is not an element of the maximum, then there exists $i, j\in[\![1,N-n-1]\!]$ such that the maximum contains $u_{N-n} \tilde v_j + u_i \tilde v_{N-n}$, however
%\begin{equation}
%u_{N-n} \tilde v_j + u_i \tilde v_{N-n} \leq u_{N-n} \tilde v_{N-n} + u_i \tilde v_j.
%\end{equation}
%This contradicts our assumption. Therefore, $u_{N-n} \tilde v_{N-n}$ is an element of the maximum, which completes the induction.
The result follows by induction.
\end{proof}

We immediately deduce from this lemma that once $\Gamma$ and $V$ are given, the optimal solution $P_{\max}^{\text{approx}}$  of~\eqref{Jpapprox} can  be determined explicitly as the matrix corresponding to the permutation which associates the largest element of $\Gamma$ with the largest element of $V$, the second largest element with the second largest, and so on.
\section{Numerical experiments} 
In this section, we present some numerical results to evaluate the efficiency of the various mixing strategies presented above.

\subsection{Parameter settings}
Let the water elevation $h=\SI{0.4}{m}$. %and the average time for one lap of the raceway pond $T>0$.
All the numerical parameters values considered in this section for Han's model are taken from~\cite{Grenier2020} and recalled in Table~\ref{Tab1}.
\begin{table}[htbp]
\caption{Parameter values for Han Model }
\label{Tab1}
\begin{center}
\begin{tabular}{|c|c|c|}
\hline
$k_r$  & $6.8$ $10^{-3}$ & s$^{-1}$\\
\hline
$k_d$ & $2.99$ $10^{-4}$  & -\\
\hline
$\tau$ & 0.25 & s\\
\hline
$\sigma$ & 0.047 & m$^2$.($\mu$mol)$^{-1}$\\
\hline
$k$  & $8.7$ $10^{-6}$ & -\\
\hline
$R$ &  $1.389$ $10^{-7}$ & s$^{-1}$\\
\hline
\end{tabular}
\end{center}
\end{table}

% \begin{table}[htbp]
% \caption{Parameter values for Han Model }
% \label{Tab1}
% \begin{center}
% \begin{tabular}{|c|c|c|}
% \hline
% $k_r$  & $4.8$ $10^{-4}$ & \si{s^{-1}}\\
% \hline
% $k_d$ & $2.99$ $10^{-4}$  & -\\
% \hline
% $\tau$ & 6.8493 & \si{s}\\
% \hline
% $\sigma$ & 0.0029 & \si{m^2.(\mu mol)^{-1}}\\
% \hline
% $k$  & $3.6467$ $10^{-4}$ & -\\
% \hline
% $R$ &  $0.05$ & \si{d^{-1}}\\
% \hline
% \end{tabular}
% \end{center}
% \end{table}

Recall that $I_s$ is the light intensity at the free surface. In order to fix the value of the light extinction coefficient $\varepsilon$ in~\eqref{Beer}, we assume that only $q$ percent of $I_s$ is still available at the bottom of the raceway, meaning that $I_b=qI_s$, where $q\in[0,1]$. It follows that $\varepsilon$ can be computed by
\begin{equation*}
\varepsilon = (1/h)\ln(1/q).
\end{equation*}
In practise, this quantity can be implemented in the experiments by adapting the harvest frequency.

\subsection{Examples of optimal devices}
% In this section, we provide some remarkable numerical results that we have found during our tests.

% \begin{conjecture}
% There exists $I_z\in \R^+$, such that the $\sigma_{\max}$ associated with $P_{\max}$ for Problem~\eqref{optproblem} can be resumed into three cases:
% \begin{enumerate} 
% \item if $I_N\geq I_z$, then $\sigma_{\max} = Id$;
% \item if $\exists n\in [\![1,N]\!]$ s.t. $I_n I_N < I_z^2$ and $I_{n-1}I_N\geq I_z^2$, then
% \begin{equation*}
% \sigma_{\max} = 
% \begin{pmatrix}
% 1 & 2 & \cdots & n-1 & n  & n+1 & \cdots & N\\
% 1 & 2 & \cdots & n-1 & N & N-1 & \cdots & n
% \end{pmatrix}
% \end{equation*}
% \item if $I_1I_N<I_z^2$, then 
% \begin{equation*}
% \sigma_{\max} = 
% \begin{pmatrix}
% 1 & 2 & \cdots & n\\
% n & n-1 & \cdots & 1
% \end{pmatrix}
% \end{equation*}
% \end{enumerate}
% For the last two cases, the optimal time $t_{\text{opt}}$ to switch is $0$.
% \end{conjecture}

% Note that the threshold $I_z$ is determinated by the parameter of the system. The study of this threshold is leaving to a future contribution.

In this section, we present some examples of optimal solution of~\eqref{optproblem}. Set $N=11$ the number of  layers, meaning that we  test numerically $N!$ (i.e. 39916800) permutation matrices. 
% Under the parameter settings presented in Tab.~\ref{Tab1}, $I_z$ here is around $\SI{}{\mu mol.m^{-2}.s^{-1}}$. 
The light intensity at the free surface is set to be $I_s=\SI{2000}{\mu mol.m^{-2}.s^{-1}}$ which corresponds to a maximum value during summer in the south of France.
% \Julien{d'où vient cette valeur ? Peut-on mettre ", which corresponds to a typical value during a sunny day in the south of France."}
% which approximate the summer average light intensity. 
% \Julien{where ???}

Let us start with a series of tests with the average time duration for one lap of the raceway pond $T=\SI{1000}{s}$. When the light attenuation ratio $q=10\%$, we find that $P_{\max}=\I$. When $q=1\%$, the optimal permutation matrix $P_{\max}$ is given by~\eqref{pmax}.
\setcounter{MaxMatrixCols}{15}
\begin{equation}\label{pmax}
P_{\max}=\begin{pmatrix}
0 & 1 & 0 & 0 & 0 & 0 & 0 & 0 & 0 & 0 & 0 \\
0 & 0 & 0 & 1 & 0 & 0 & 0 & 0 & 0 & 0 & 0 \\
0 & 0 & 0 & 0 & 0 & 1 & 0 & 0 & 0 & 0 & 0 \\
0 & 0 & 0 & 0 & 0 & 0 & 0 & 1 & 0 & 0 & 0 \\
0 & 0 & 0 & 0 & 0 & 0 & 0 & 0 & 0 & 1 & 0 \\
0 & 0 & 0 & 0 & 0 & 0 & 0 & 0 & 0 & 0 & 1 \\
0 & 0 & 0 & 0 & 0 & 0 & 0 & 0 & 1 & 0 & 0 \\
0 & 0 & 0 & 0 & 0 & 0 & 1 & 0 & 0 & 0 & 0 \\
0 & 0 & 0 & 0 & 1 & 0 & 0 & 0 & 0 & 0 & 0 \\
0 & 0 & 1 & 0 & 0 & 0 & 0 & 0 & 0 & 0 & 0 \\
1 & 0 & 0 & 0 & 0 & 0 & 0 & 0 & 0 & 0 & 0
\end{pmatrix}.
\end{equation}
When $q=0.1\%$, the optimal permutation matrix $P_{\max}$ is given by~\eqref{pmax1}. For all three cases, $P_{\max}^{\text{approx}} = P_{\max}$.
\begin{equation}\label{pmax1}
P_{\max}=\begin{pmatrix}
0 & 0 & 0 & 0 & 1 & 0 & 0 & 0 & 0 & 0 & 0 \\
0 & 0 & 0 & 0 & 0 & 0 & 1 & 0 & 0 & 0 & 0 \\
0 & 0 & 0 & 0 & 0 & 0 & 0 & 0 & 1 & 0 & 0 \\
0 & 0 & 0 & 0 & 0 & 0 & 0 & 0 & 0 & 0 & 1 \\
0 & 0 & 0 & 0 & 0 & 0 & 0 & 0 & 0 & 1 & 0 \\
0 & 0 & 0 & 0 & 0 & 0 & 0 & 1 & 0 & 0 & 0 \\
0 & 0 & 0 & 0 & 0 & 1 & 0 & 0 & 0 & 0 & 0 \\
0 & 0 & 0 & 1 & 0 & 0 & 0 & 0 & 0 & 0 & 0 \\
0 & 0 & 1 & 0 & 0 & 0 & 0 & 0 & 0 & 0 & 0 \\
0 & 1 & 0 & 0 & 0 & 0 & 0 & 0 & 0 & 0 & 0 \\
1 & 0 & 0 & 0 & 0 & 0 & 0 & 0 & 0 & 0 & 0
\end{pmatrix}.
\end{equation}

We next study a much extreme case where the time duration of one lap $T=\SI{1}{s}$.  When the ratio $q=10\%$, we find the optimal matrix $P_{\max}=\I$. When $q=1\%$, the optimal permutation matrix $P_{\max}$ is a two-block matrix consisting of a block of identity and a block of anti-diagonal matrix with one as entries. This matrix is shown in~\eqref{pmax2}.
\begin{equation}\label{pmax2}
P_{\max}=
\begin{pmatrix}
1 & 0 & 0 & 0 & 0 & 0 & 0 & 0 & 0 & 0 & 0 \\
0 & 1 & 0 & 0 & 0 & 0 & 0 & 0 & 0 & 0 & 0 \\
0 & 0 & 0 & 0 & 0 & 0 & 0 & 0 & 0 & 0 & 1 \\
0 & 0 & 0 & 0 & 0 & 0 & 0 & 0 & 0 & 1 & 0 \\
0 & 0 & 0 & 0 & 0 & 0 & 0 & 0 & 1 & 0 & 0 \\
0 & 0 & 0 & 0 & 0 & 0 & 0 & 1 & 0 & 0 & 0 \\
0 & 0 & 0 & 0 & 0 & 0 & 1 & 0 & 0 & 0 & 0 \\
0 & 0 & 0 & 0 & 0 & 1 & 0 & 0 & 0 & 0 & 0 \\
0 & 0 & 0 & 0 & 1 & 0 & 0 & 0 & 0 & 0 & 0 \\
0 & 0 & 0 & 1 & 0 & 0 & 0 & 0 & 0 & 0 & 0 \\
0 & 0 & 1 & 0 & 0 & 0 & 0 & 0 & 0 & 0 & 0 
\end{pmatrix}.
\end{equation}
When $q=0.1\%$, we find the optimal matrix $P_{\max}$ as an anti-diagonal matrix with one as entries. For all three cases, $P_{\max}^{\text{approx}}$ is an anti-diagonal matrix with one as entries.

To evaluate the efficiency of the corresponding mixing strategy, let us define 
\begin{align}
&r_1:=\frac{\bar \mu_N(P_{\max})-\bar \mu_N(\I)}{\bar \mu_N(\I)}, \label{r1}\\
&r_2:=\frac{\bar \mu_N(P_{\max})-\bar \mu_N(P_{\min})}{\bar \mu_N(P_{\min})},\label{r2}\\
&r_3:=\frac{\bar \mu_N(\I)-\bar \mu_N(P_{\min})}{\bar \mu_N(\I)},\label{r3}
\end{align}
where $P_{\min}\in\P$ is the matrix that minimizes $J$, (see~\eqref{Jp}), i.e., that  corresponds to the worse strategy.
Figure~\ref{figqr} shows how these three ratios change with $q$.
\begin{figure}[thpb]
\centering
\includegraphics[scale=0.3]{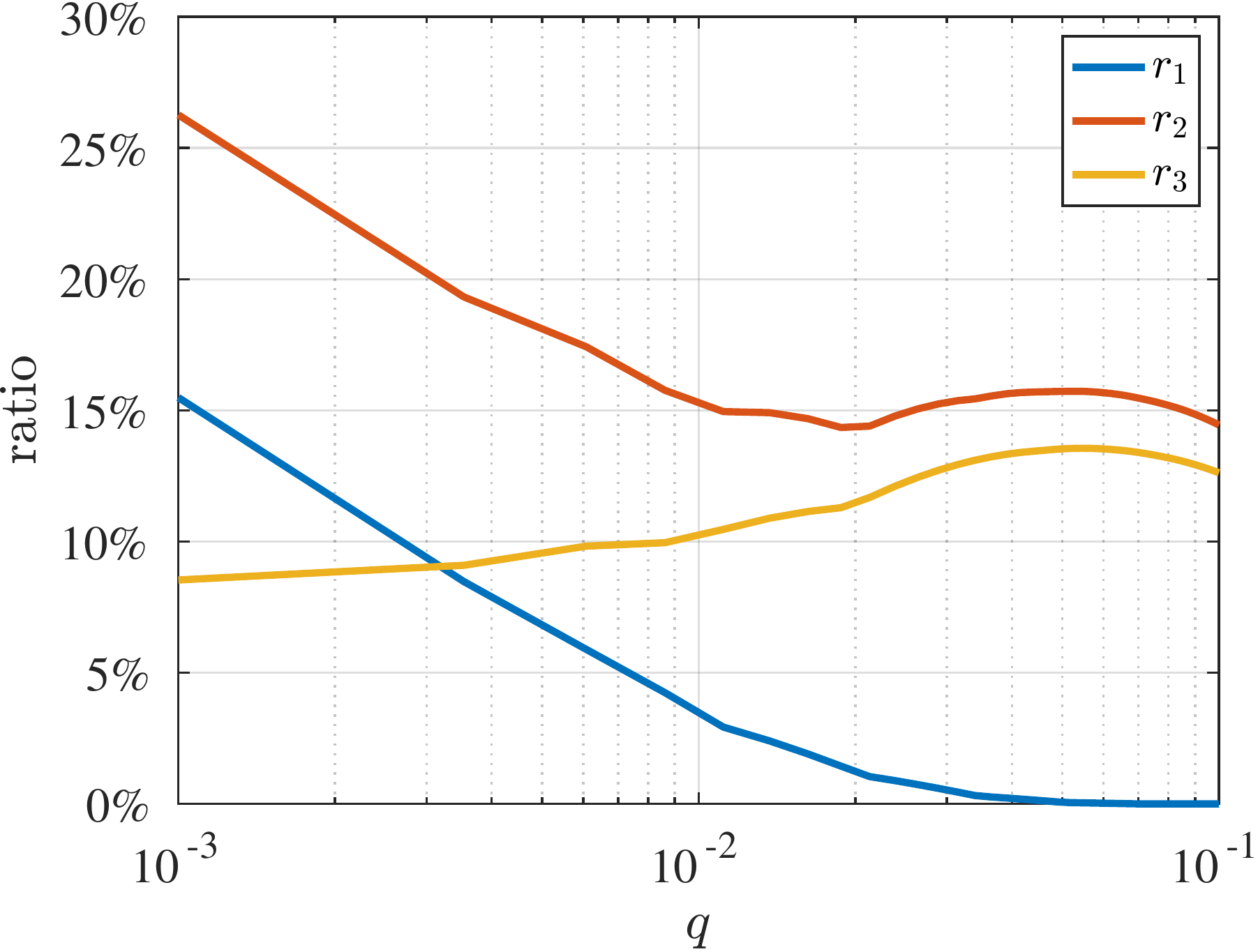}
\caption{The ratio $r_1,r_2$ and $r_3$ for $q\in[0.1\%,10\%]$, the blue curve stands for $r_1$, the red curve represents $r_2$ and the yellow curve is for $r_3$.}
\label{figqr}
\end{figure}
It turns out that the optimal mixing has more influence in the case of high density,  i.e., when the \% of transmitted light is lower. An optimal permutation strategy will increase growth rate by 15\% for $q=10^{-3}$ compared to a situation without mixing. It is also worth remarking that a non appropriate mixing can reduce the growth rate by almost 30\% compared to the optimal permutation.% The role of the mixing device for enhancing growth is lower for lower cell density (higher $q$). 

\subsection{Further numerical tests}
We now study in a more extensive way the influence of various parameters on the optimal strategy.
\subsubsection{Results for different strategies}
The first test aims at studying the influence of permutation strategies on the average of net specific growth rate $\bar \mu_N$. More precisely, we compute $\bar \mu_N$ for the next four strategies: the optimal matrix $P_{\max}$ that solves Problem~\eqref{optproblem}, the worst matrix $P_{\min}$ which minimizes $J$, the no permutation case where $P=\I$ and the matrix $P_{\max}^{\text{approx}}$ which solves the approximate Problem~\eqref{Jpapprox}. In our test, we consider $N=7$ layers, $I_s\in[0,2500]$, and $q\in[0.1\%,10\%]$. Figure~\ref{figI0q} presents the results for $T=\SI{1}{s}$, $T=\SI{500}{s}$ and $T=\SI{1000}{s}$.
\begin{figure}[thpb]
\centering
\includegraphics[scale=0.3]{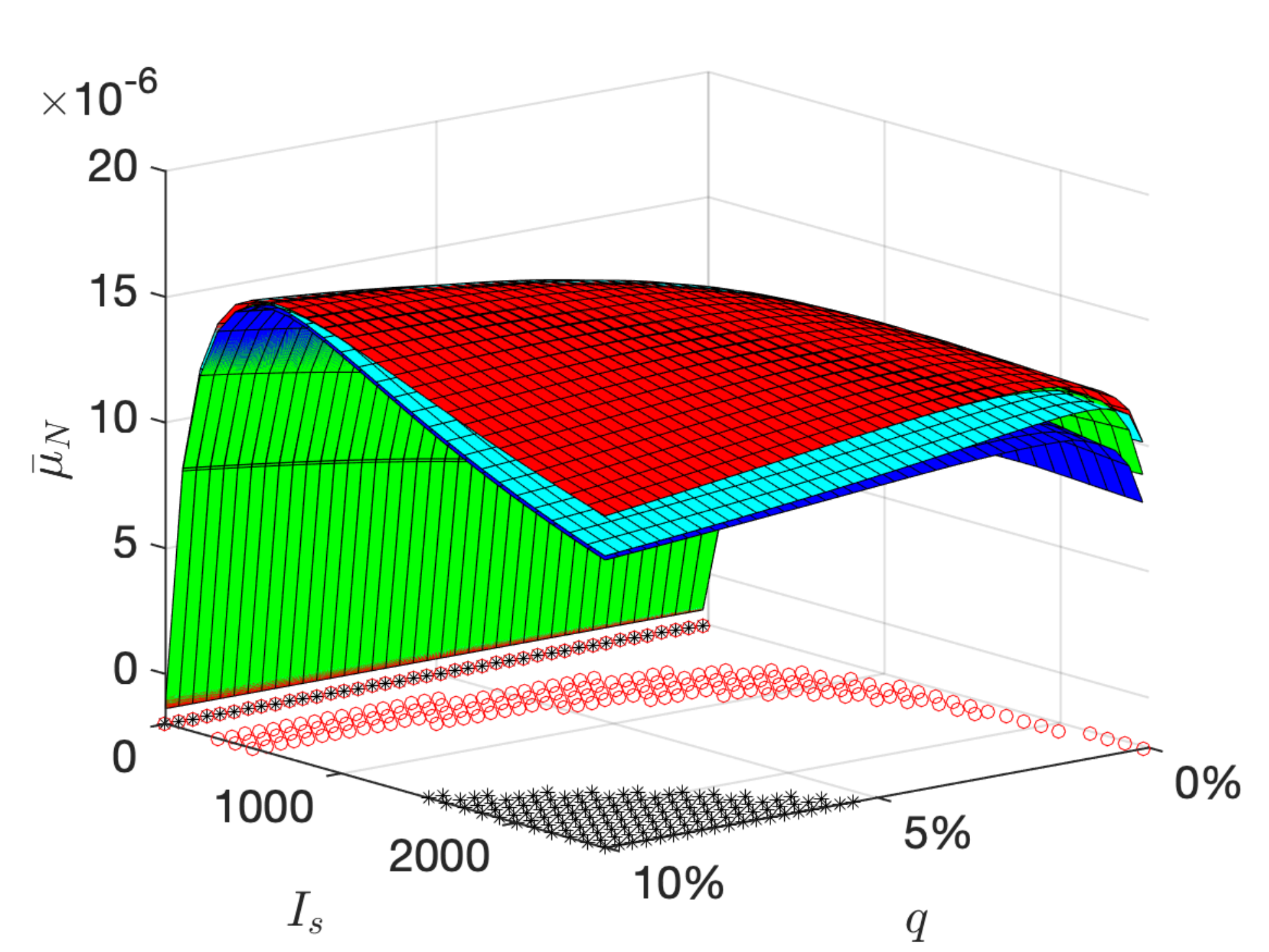}
\includegraphics[scale=0.3]{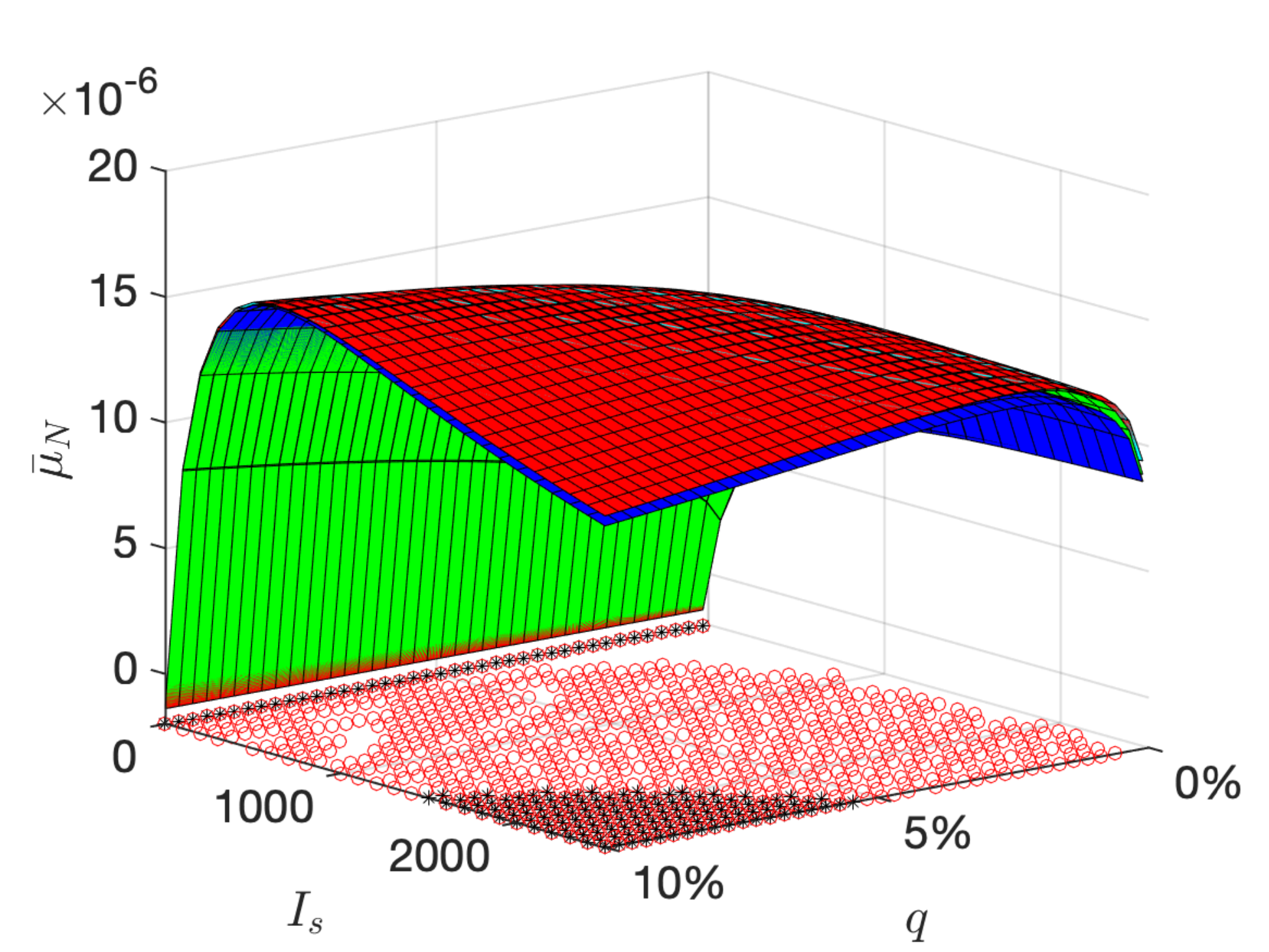}
\includegraphics[scale=0.3]{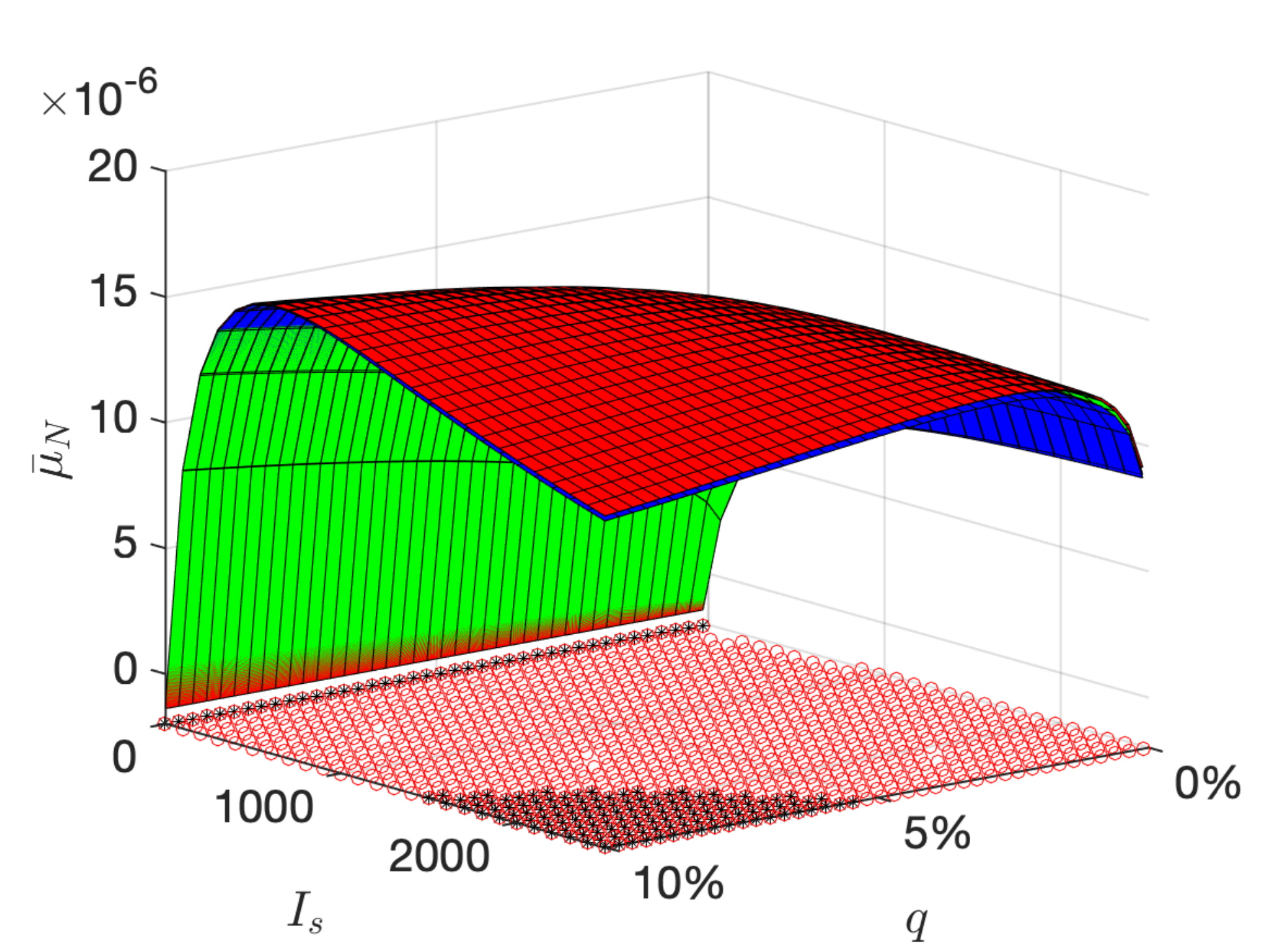}
\caption{Average net specific growth rate $\bar \mu_N$ for $I_s\in[0,2500]$ and $q\in[0.1\%,10\%]$. In each figure, the red surface is obtained with $P_{\max}$, the dark blue surface is obtained with $P_{\min}$, the green surface is obtained with $\I$ and the light blue surface is obtained with $P_{\max}^{\text{approx}}$. The black stars represent the cases where $P_{\max}=\I$ and the red circles represent the cases where $P_{\max}=P_{\max}^{\text{approx}}$. Top: for $T=\SI{1}{s}$. Middle: for $T=\SI{500}{s}$. Bottom: for $T=\SI{1000}{s}$. }
\label{figI0q}
\end{figure}

% In particular, the behaviour of $\bar \mu_{N}$ with respect to $q$ and with respect to $I_1$ for $T=\SI{1}{s}$ are presented in Fig~\ref{figI1figq}. 
% \begin{figure}[thpb]
% \centering
% \includegraphics[scale=0.3]{../fig/N_7_T_1_3q_muIs_Fig1_N_layers_2.pdf}
% \includegraphics[scale=0.3]{../fig/N_7_T_1_3Is_muq_Fig1_N_layers_2.pdf}
% \caption{Influence of light intensity at surface ($I_1$) [top]] and \% transmitted light ($q$) [Bottom] (for $T=\SI{1}{s}$).}
% \label{figI1figq}
% \end{figure}

We see that the original problem~\eqref{Jp} and the approximated problem~\eqref{Jpapprox} coincide much more often for large values of the lap duration time $T$. In fact, the four surfaces become closer one to the others for large values of $T$.

\subsubsection{Influence of lap duration, \% of transmitted light and light at surface on average growth rate}
To assess the influence of the light intensity at the free surface $I_s$, the light attenuation ratio $q$ and the lap duration time $T$, we compute $\bar \mu_N$ for the optimal strategy associated with $\P_{\max}$. We consider again $N=7$ layers, $I_s\in[0,2500]$, $q\in[0.1\%,10\%]$ and $T\in[1,1000]$. The results are shown in~Figure~\ref{figqT}.
\begin{figure}[thpb]
\centering
\includegraphics[scale=0.3]{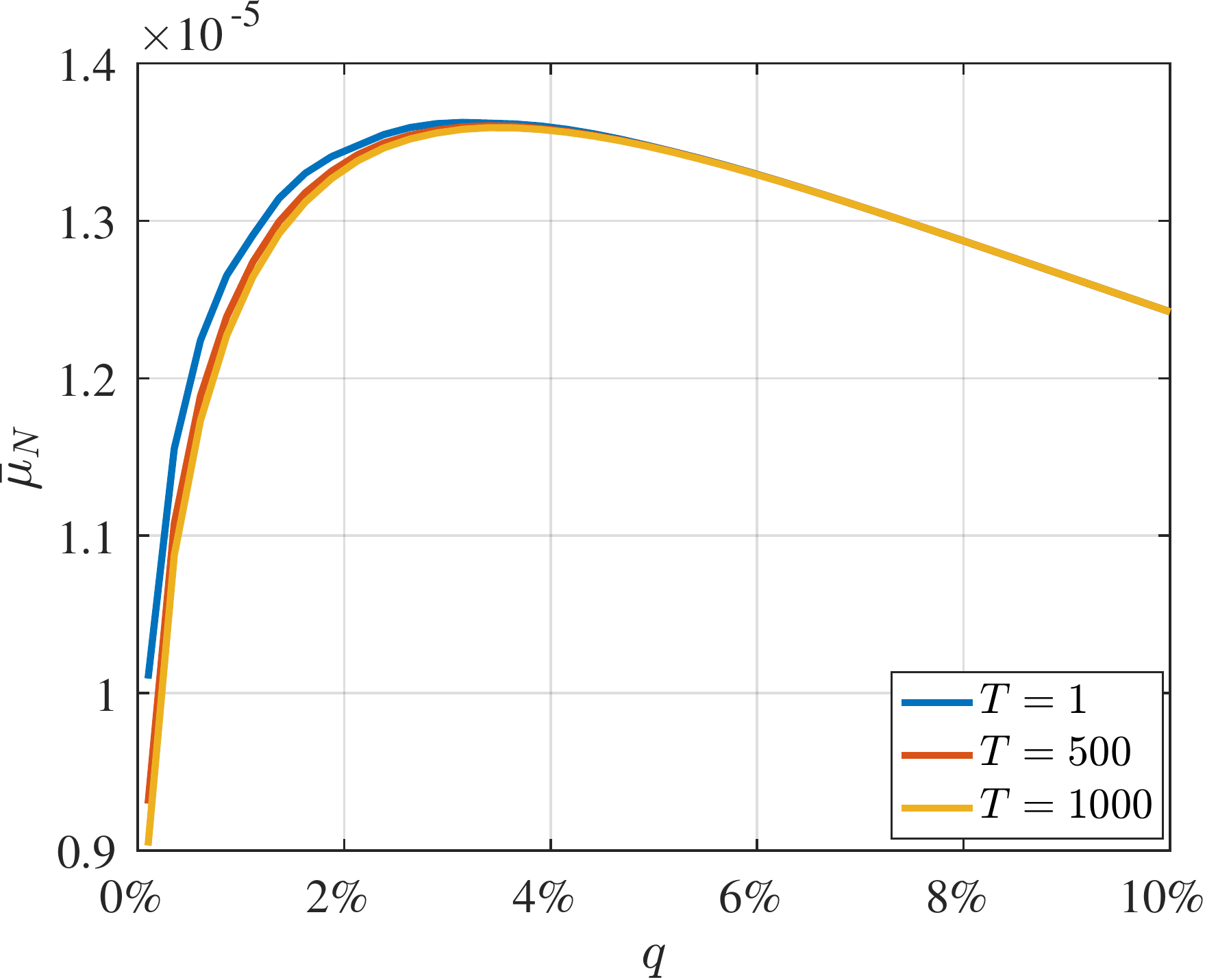}
\includegraphics[scale=0.3]{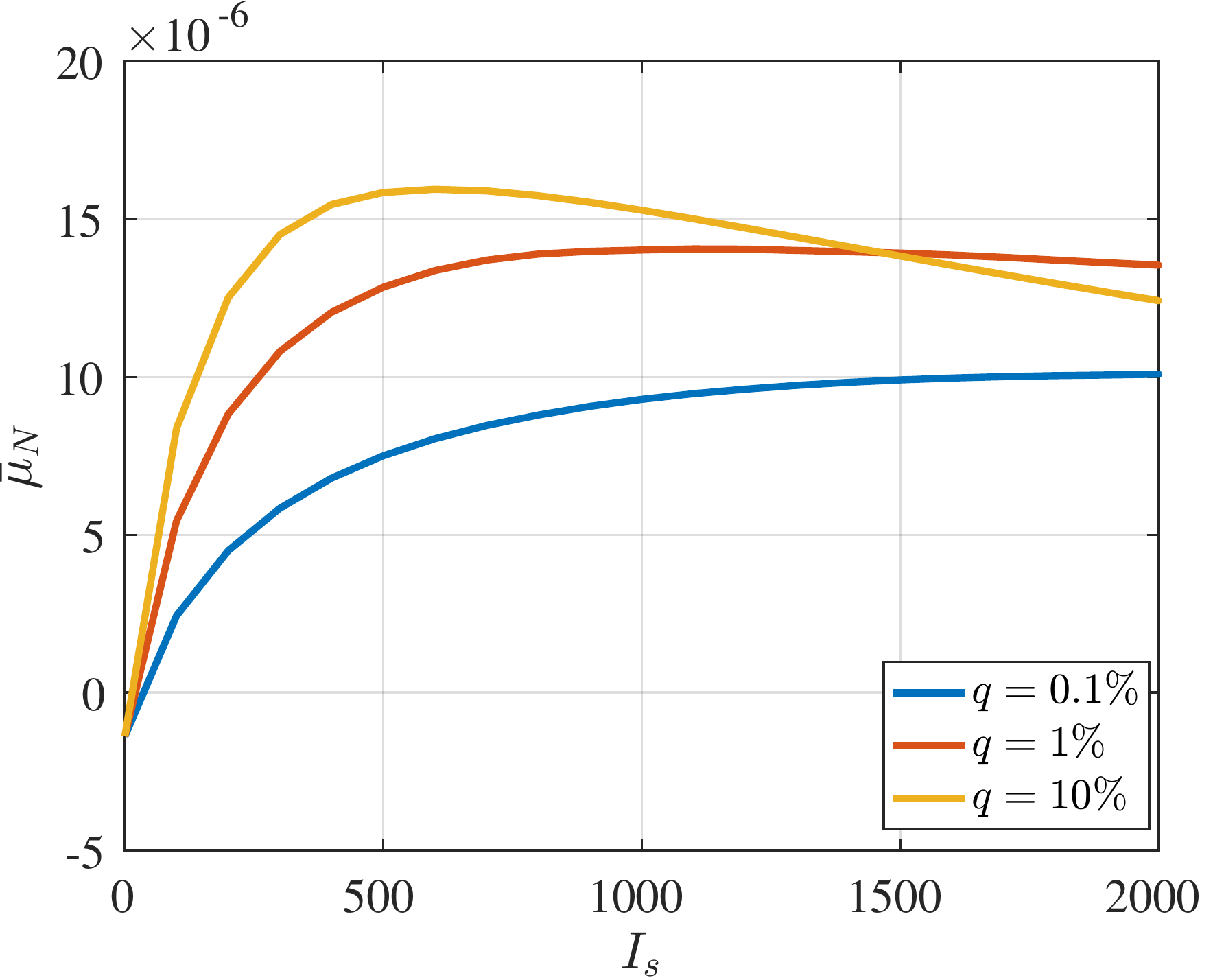}
\includegraphics[scale=0.3]{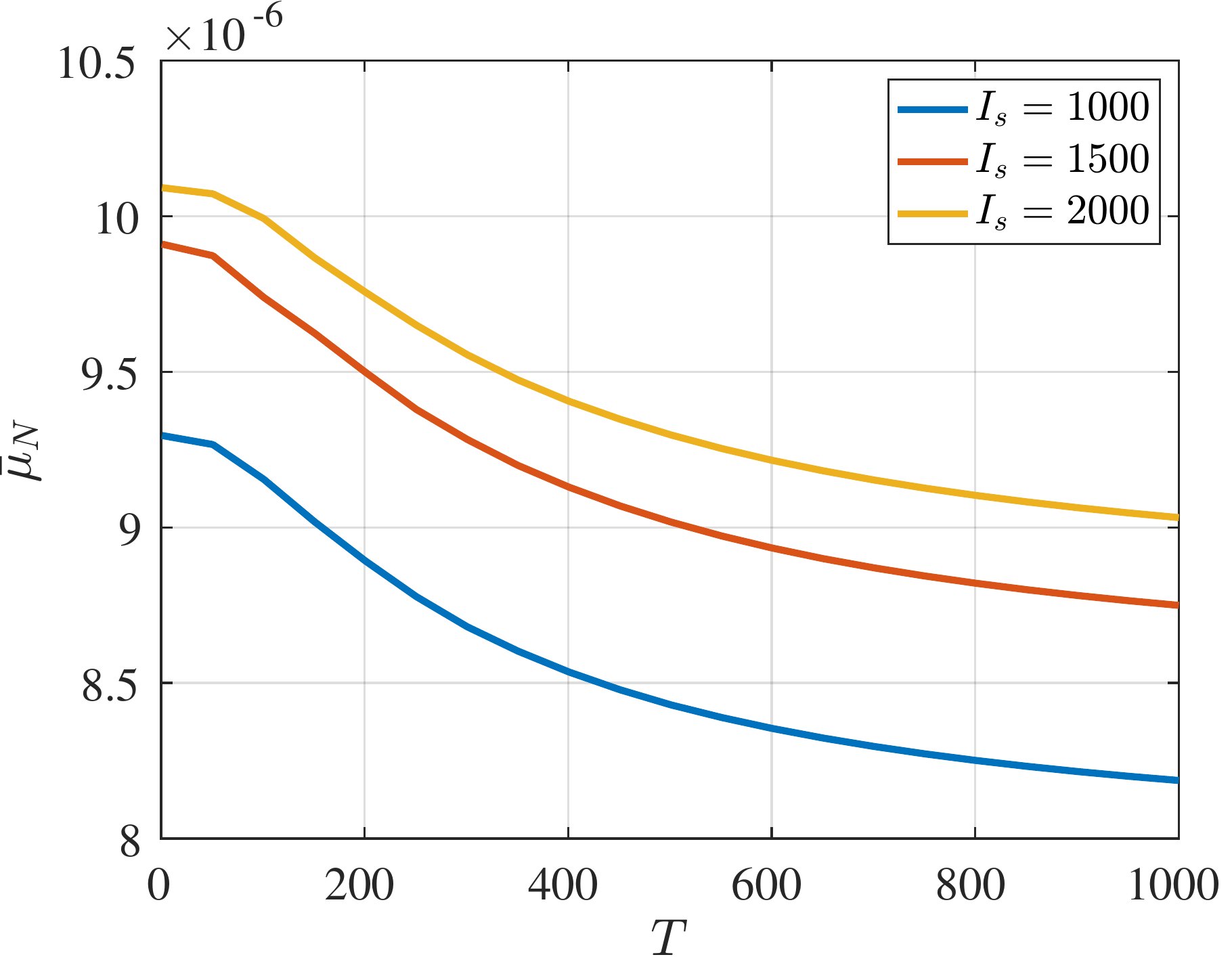}
\caption{Average net specific growth rate $\bar \mu_N(P_{\max})$ for $q\in[0.1\%,10\%]$ when $I_s=\SI{2000}{\mu mol.m^{-2}.s^{-1}}$ (Top), for $I_s\in[0,2000]$ when $T=\SI{1}{s}$ (Middle) and for $T\in[1,1000]$ when $q=0.1\%$ (Bottom).}
\label{figqT}
\end{figure}
We observe that for a fixed light intensity at surface ($I_s$), the influence of the time duration ($T$) is very weak. Besides, there exists an optimal value for \% of the transmitted light ($q$) which is around 3\%. We also find that for small values of $q$, there exists a non-trivial optimal light intensity at surface, e.g., $I_s\approx \SI{500}{\mu mol.m^{-2}.s^{-1}}$ for $q=0.1\%$. Finally, average growth rate ($\bar \mu_N$) appears to increase monotonically when $T$ goes to 0. This  \textit{flashing effect} corresponds to the fact that the algae exposed to high frequency flashing have a better growth. This phenomenon has already been reported in literature, see, e.g.,~\cite{Hartmann2013,Lamare2018}.
% In particular, the behaviour of $\bar \mu_{N}$ with respect to $T$ and with respect to $q$ for $I_1=\SI{2050}{\mu mol.m^{-2}.s^{-1}}$ are presented in Fig~\ref{figqfigT}.
% \begin{figure}[thpb]
% \centering
% \includegraphics[scale=0.3]{../fig/N_7_Is_2050_3q_muT_Fig2_N_layers_2.pdf}
% \includegraphics[scale=0.3]{../fig/N_7_Is_2050_3T_muq_Fig2_N_layers_2.pdf}
% \caption{Influence of lap duration ($T$) [top] and  \% transmitted light ($q$) [down] on average growth rate (for $I_1=\SI{2050}{\mu mol.m^{-2}.s^{-1}}$).}
% \label{figqfigT}
% \end{figure}

\subsubsection{Influence of light at the free surface and of the lap duration time $T$ on the ratios}
We finally study the influence of light intensity at the free surface $I_s$ and the average lap duration time $T$ on the three ratios~\eqref{r1}-\eqref{r3}. Let keep the number of layers $N=7$, $q=0.1\%$, $T\in[1,1000]$ and $I_s\in[0,2500]$. Figure~\ref{figTIs} presents the results for these three ratios $r_1,\, r_2,\, r_3$. %see their definitions in~\eqref{}.
\begin{figure}[thpb]
\centering
\includegraphics[scale=0.3]{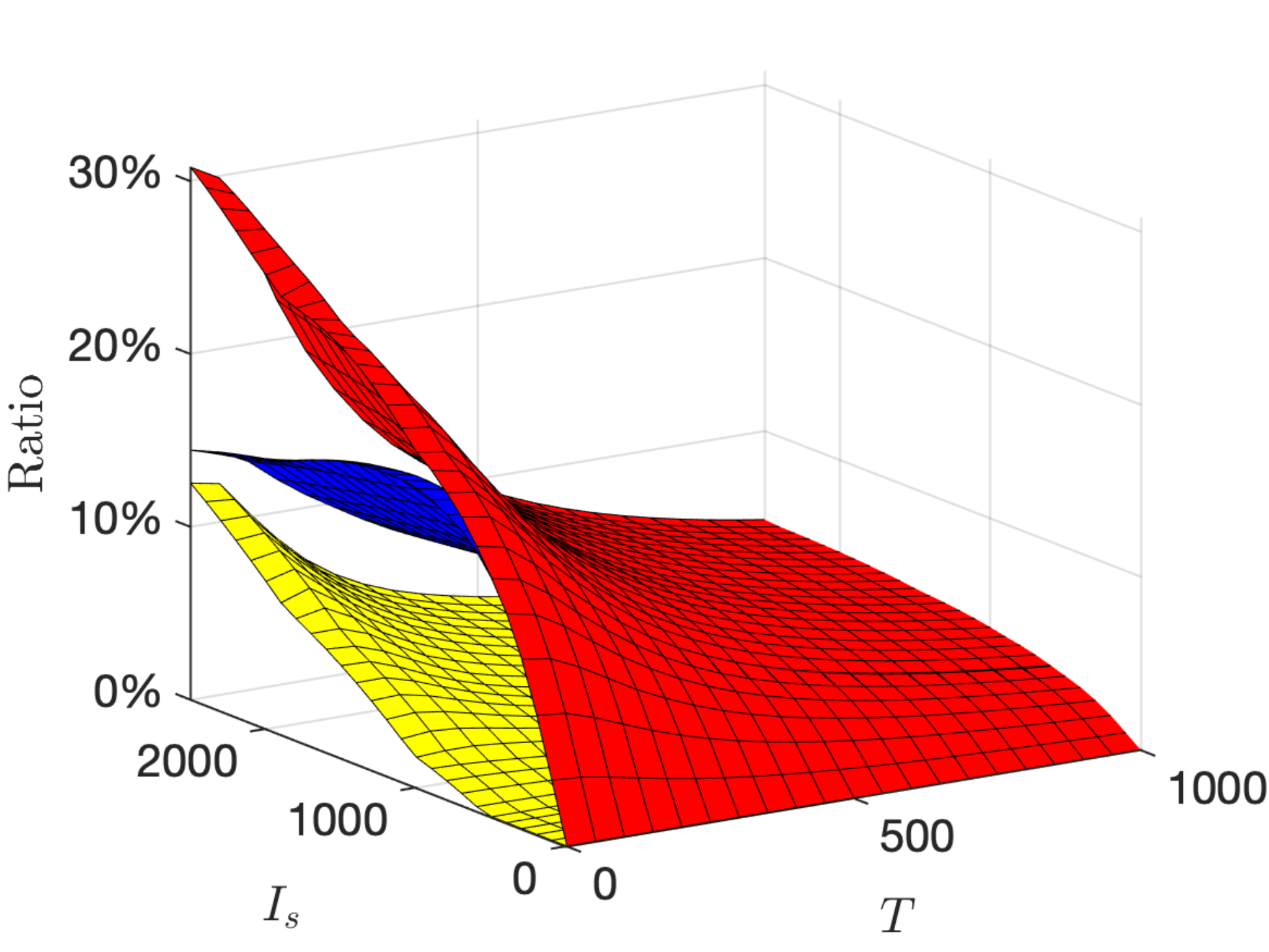}
\caption{The ratios $r_1,r_2,r_3$ with respect to $T$ and $I_s$. The blue surface is $r_1$, the red surface is $r_2$ and the yellow surface is $r_3$.}
\label{figTIs}
\end{figure}
%A similar result has been observed as in Figure~\ref{figqr}. 
We see that the relative improvement between the worst and the best strategy may reach 30\%. This confirms the results obtained in Figure~\ref{figqr}. In our experiments, we have observed that this improvement can even be greater when considering higher values of $I_s$. 
Moreover, we observe again the \textit{flashing effect}.
% \begin{figure}[thpb]
% \centering
% \includegraphics[scale=0.3]{../fig/N_7_q_001_3Is_muT_Fig3_N_layers_2.pdf}
% \includegraphics[scale=0.3]{../fig/N_7_q_001_3T_muIs_Fig3_N_layers_2.pdf}
% \caption{Influence of time duration ($T$) [top] and light intensity at surface ($I_1$) [bottom] on average growth rate (for $q=1\%$).}
% \label{figTfigIs}
% \end{figure}

%%%%%%%%%%%%%%%%%%%%%%%%%%%%%%%%%%%%%%%%%%%%%%%%%%%%%%%%%%%%%%%%%%%%%%%%%%%%%%%%
\section{Conclusion}
We have presented a model of raceway that focuses on the mixing  caused by the flow driving device. This model enables us to find mixing strategies that maximize the production. On the other hand, it requires a significant computational effort when dealing with fine discretization of the fluid layers. We overcome this difficulty by defining an approximation that has an explicit solution that appears to coincides with the true solution when the lap duration $T$ is large enough.  
Our experimental results show the significance of the choice of the mixing strategy: the relative ratio between the best and the worst case reaches 30\% in some cases. We also observe a flashing effect meaning that better results are obtained when $T$ goes to zero. 

%We found numerically a relevant strategy to %mix up the algae at different level, and a %flashing effect is also been found in this %case. The 
Further works will be devoted to the understanding of the permutation strategies that are found and to the reduction of the computational cost. %of the solving.% under different cases.

%%%%%%%%%%%%%%%%%%%%%%%%%%%%%%%%%%%%%%%%%%%%%%%%%%%%%%%%%%%%%%%%%%%%%%%%%%%%%%%%
\section{ACKNOWLEDGEMENTS}

This research benefited from the support of the FMJH Program PGMO funded by EDF-THALES-ORANGE.

\bibliographystyle{plain}        % Include this if you use bibtex 
\bibliography{auto}              % and a bib file to produce the 
                                 % bibliography (preferred). The
                                 % correct style is generated by
                                 % Elsevier at the time of printing.

%%%%%%%%%%%%%%%%%%%%%%%%%%%%%%%%%%%%%%%%%%%%%%%%%%%%%%%%%%%%%%%%%%%%%%%%%%%%%%%%
\appendix

\section{Computations in II.C}
In this section, we provide the detail of the computation for an arbitrate layer $n\in[\![1,N]\!]$. Given two points $t_1$ and $t_2$, since $I_n$ is constant, Equation~\eqref{evolC} can be integrated and becomes
\begin{equation}\label{cnt}
C_n(t_2) = e^{\alpha(I_n)(t_1-t_2)}C_n(t_1) + \frac{\beta(I_n)}{\alpha(I_n)}(1 - e^{\alpha(I_n)(t_1-t_2)})
\end{equation}
The time integral in \eqref{muN} can be computed by
\begin{equation*}
\begin{split}
\int_0^T \mu(C_n(t),I_n) \D t= &\int_0^T -\gamma(I_n)C_n(t) + \zeta(I_n) \D t\\
= &-\gamma(I_n) \int_0^TC_n(t) \D t + \zeta(I_n) T.
\end{split}
\end{equation*}
Replacing $t_2$ by $t$ and $t_1$ by 0 in~\eqref{cnt} and integrating $t$ from $0$ to $T$ gives
\begin{equation*}
\begin{split}
& \int_0^T \Big(e^{-\alpha(I_n)t}C_n(0) + \frac{\beta(I_n)}{\alpha(I_n)}(1 - e^{-\alpha(I_n)t}) \Big)\D t\\
=&\frac{C_n(0)}{\alpha(I_n)}(1-e^{-\alpha(I_n)T}) +  \frac{\beta(I_n)}{\alpha(I_n)}T -  \frac{\beta(I_n)}{\alpha^2(I_n)}(1-e^{-\alpha(I_n)T}).
\end{split}
\end{equation*}
\end{document}